\numberwithin{equation}{section}
\newcommand{\m}{\CMcal}
\theoremstyle{plain}
\newtheorem{theorem}{Theorem}[section]
\newtheorem{corollary}[theorem]{Corollary}
\newtheorem{proposition}[theorem]{Proposition}
\newtheorem{definition}[theorem]{Definition}
\begin{document}
	
	\title[]{A simple proof of the fixed point theorem in $C^*$-algebra valued metric space}
	\date{\today}

	\author[Trivedi]{Harsh Trivedi}
	\address{ 	Silver Oak College of Engineering and Technology,
		Near Bhagwat Vidyapeeth, Ahmedabad-380061, India}
	\email{trivediharsh26@gmail.com}

	\begin{abstract}
		
		We obtain a fundamental inequality for a contraction with respect to a $C^*$-algebra valued metric space. As an application of this inequality a simple direct proof is given for the fixed point theorem in $C^*$-algebra valued metric space.\\\\
		
		\noindent {\bf 2010 Mathematics Subject Classification:} 47H10, 46L05.\\
		\noindent {\bf Key words:}  Banach contraction principle, fixed point, $C^*$-algebras.

	\end{abstract}

	\maketitle
	
	\section{Introduction}
	
	In \cite{ZLH14}, Ma, Jiang, and Sun defined $C^*$-algebra valued metric spaces and the corresponding contractions. We first recall both these definitions:
	\begin{definition}
		Let $X$ be a non empty set and let $\m A$ be a unital $C^*$-algebra. Then the function $d:X\times X\to \m A$ is called a {\rm $C^*$-algebra valued metric} on $X$ if the following conditions hold: 
		\begin{itemize}
			\item [(a)] $d(x,y)\geq 0$ for each $x,y\in X.$ In addition, $d(x,y)=0~\Leftrightarrow~x=y;$  
			\item [(b)] $d(x,y)=d(y,x)$ for each $x,y\in X;$
			\item [(c)] $d(x,y)\leq d(x,z)+d(z,y)$ for each $x,y,z\in X.$
		\end{itemize}
		In this case, we say that $(X,\m A,d)$ is a {\rm $C^*$-algebra valued metric space}.
	\end{definition}
	
	\begin{definition}
		Assume $(X,\m A,d)$ to be a $C^*$-algebra valued metric space. A function $T:X\to X$ is called {\rm $C^*$-algebra valued contraction} if
		there exists an $A\in\m A$ such that $\|A\|<1$ and 
		$$d(Tx,Ty)\leq A^*d(x,y)A~\mbox{for every}~x,y\in X.$$
	\end{definition}
	
	In \cite[Theorem 2]{ZS16}, Kadelburg and Radenovi proved that the $C^*$-algebra valued metric space version of Banach contraction principle \cite[Theorem 2.1]{ZLH14} is equivalent to the classical Banach contraction principle.
	Using the similar techniques as in \cite{R07}, in the next section we give an easy an direct way to prove the $C^*$-algebra valued metric space version of Banach contraction principle.
	
	\section{Banach contraction principle}
	
	Let $(X,\m A,d)$ be a $C^*$-algebra valued metric space and let $T:X\to X$ be a $C^*$-algebra valued contraction. Then for each $x_1,x_2\in X$ from the triangle inequality, we have
	\begin{align*}
	d(x_1,x_2)&\leq d(x_1,T(x_1))+d(T(x_1),T(x_2))+d(T(x_2),x_2)
	\\& \leq d(x_1,T(x_1))+A^* d(x_1,x_2)A+d(T(x_2),x_2).
	\end{align*}
	Taking the $C^*$-norm on both the sides and using $\|A\|<1,$ we get the following inequality:\\
	{\bf Fundamental contraction inequality:}\\
	If $(X,\m A,d)$ is a $C^*$-algebra valued metric space and $T:X\to X$ is a $C^*$-algebra valued contraction, then
	\begin{equation}\label{eq}\|d(x_1,x_2)\|\leq \frac{1} {1-\|A\|^2}\|d(x_1,T(x_1))+d(T(x_2),x_2)\|. \end{equation}
	Therefore $T$ cannot have two distinct fixed points $x_1$ and $x_2.$
	\begin{corollary}
		The contraction $T$ cannot have more than one fixed point. 
	\end{corollary}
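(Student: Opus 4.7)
The corollary is an immediate consequence of the fundamental contraction inequality \eqref{eq} that was just derived, so the plan is simply to specialize that inequality to the case where $x_1$ and $x_2$ are both fixed points of $T$.

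Concretely, I would argue by contradiction (or directly): suppose $x_1, x_2 \in X$ satisfy $T(x_1)=x_1$ and $T(x_2)=x_2$. Then $d(x_1,T(x_1))=d(x_1,x_1)=0$ and $d(T(x_2),x_2)=d(x_2,x_2)=0$ by axiom (a) of the $C^*$-algebra valued metric. Substituting these into \eqref{eq} yields
\[
\|d(x_1,x_2)\| \;\leq\; \frac{1}{1-\|A\|^2}\,\|0+0\| \;=\; 0,
\]
which is legitimate because $\|A\|<1$ guarantees the prefactor is a positive real number. Hence $\|d(x_1,x_2)\|=0$, so $d(x_1,x_2)=0$ as an element of $\mathcal{A}$, and then axiom (a) forces $x_1=x_2$.

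There is no real obstacle: the work has all been done in deriving the fundamental inequality, and the corollary is simply the observation that the right-hand side vanishes at a pair of fixed points. The only thing to be mindful of is invoking axiom (a) in both directions — first to rewrite $d(x_i,T(x_i))$ as $0$ when $T(x_i)=x_i$, and then to conclude $x_1=x_2$ from $d(x_1,x_2)=0$ — and noting that $1-\|A\|^2>0$ so dividing/multiplying by it is harmless.
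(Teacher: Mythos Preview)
Your argument is correct and matches the paper's reasoning exactly: the paper simply remarks, immediately after deriving inequality \eqref{eq}, that $T$ cannot have two distinct fixed points, which is precisely the specialization you carry out. No changes are needed.
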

	
	\begin{proposition}
		Let $(X,\m A,d)$ be a $C^*$-algebra valued metric space and let $T:X\to X$ be a $C^*$-algebra valued contraction. Then for each $x\in X,$ the sequence $\{T^n(x)\}$ is a Cauchy sequence.
	\end{proposition}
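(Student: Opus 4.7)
The plan is to apply the fundamental contraction inequality \eqref{eq} with $x_1=T^n(x)$ and $x_2=T^m(x)$, and show that the right-hand side tends to zero as $n,m\to\infty$. Concretely, for $m,n\ge 1$ the inequality gives
\begin{equation*}
\|d(T^n(x),T^m(x))\|\leq \frac{1}{1-\|A\|^2}\bigl\|d(T^n(x),T^{n+1}(x))+d(T^{m+1}(x),T^m(x))\bigr\|,
\end{equation*}
and after a single application of the norm triangle inequality in $\m A$, the problem reduces to controlling $\|d(T^k(x),T^{k+1}(x))\|$ uniformly for large $k$.

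Next I would iterate the contraction condition. Applying it repeatedly to the pair $(T^{k-1}(x),T^k(x))$ and using positivity of the elements involved, one gets
\begin{equation*}
0\leq d(T^k(x),T^{k+1}(x))\leq (A^*)^k\, d(x,T(x))\, A^k
\end{equation*}
in the order of the $C^*$-algebra. Since $0\leq a\leq b$ in a $C^*$-algebra implies $\|a\|\leq\|b\|$, and since the $C^*$-norm is submultiplicative, this gives
\begin{equation*}
\|d(T^k(x),T^{k+1}(x))\|\leq \|A\|^{2k}\,\|d(x,T(x))\|.
\end{equation*}

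Combining the two displays, I obtain
\begin{equation*}
\|d(T^n(x),T^m(x))\|\leq \frac{\|A\|^{2n}+\|A\|^{2m}}{1-\|A\|^2}\,\|d(x,T(x))\|,
\end{equation*}
and since $\|A\|<1$ the right side tends to $0$ as $n,m\to\infty$, so $\{T^n(x)\}$ is Cauchy.

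There is no real obstacle: the only mildly non-trivial input is the order-to-norm passage $0\leq a\leq b\Rightarrow\|a\|\leq\|b\|$, which is standard for positive elements in a $C^*$-algebra and is precisely what converts the algebra-valued contraction estimate into the scalar geometric bound. The rest is the fundamental inequality doing all the work, which is exactly the point of the paper.
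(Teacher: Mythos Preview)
Your argument is correct and follows essentially the same route as the paper: substitute $x_1=T^n(x)$, $x_2=T^m(x)$ into the fundamental inequality, iterate the contraction to bound $d(T^k(x),T^{k+1}(x))$ by $(A^*)^k d(x,T(x))A^k$, pass to norms, and obtain the same geometric estimate $\frac{\|A\|^{2n}+\|A\|^{2m}}{1-\|A\|^2}\|d(x,T(x))\|$. The only cosmetic difference is that you split the sum via the norm triangle inequality before estimating each term, whereas the paper keeps the sum inside the norm and uses $0\le a\le b,\ 0\le c\le d\Rightarrow \|a+c\|\le\|b+d\|$; both lead to the identical final bound.
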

	\begin{proof}
		Substitute $x_1=T^n(x)$ and $x_2=T^m(x)$ in the inequality \ref{eq}, we obtain
		\begin{align}\label{eq2}
		\|d(T^n(x),T^m(x))\|&\nonumber\leq \frac{1} {1-\|A\|^2}\|d(T^n(x),T^{n+1}(x))+d(T^{m+1}(x),T^m(x))\|\\
		&\nonumber \leq \frac{1} {1-\|A\|^2}\|A^{*n}d(x,T(x))A^n+A^{*m} d(T(x),x))A^m\|\\
		& \leq \frac{\|A\|^{2n}+\|A\|^{2m}} {1-\|A\|^2} \|d(x,T(x))\|.
		\end{align}
		Therefore using $\|A\|<1,$ the preceding inequality implies that $\{T^n(x)\}$ is a Cauchy sequence.
	\end{proof}
	
	When $(X,\m A,d)$ is complete, the Cauchy sequence $\{T^n(x)\}$ converges to a point $p\in X.$ It is trivial that $p$ is a fixed point of $T.$
	Taking $m\to \infty,$ in inequality \ref{eq2}, we have the following version of Banach fixed point theorem (see \cite[Theorem 2.1]{ZLH14}):
	
	\begin{theorem}
		Suppose $(X,\m A,d)$ is a $C^*$-algebra valued complete metric space and $T:X\to X$ is a $C^*$-algebra valued contraction. Then $T$ has a unique fixed point $p\in X.$ 
	\end{theorem}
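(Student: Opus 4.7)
The plan is to run standard Picard iteration, assembling the three pieces already in place: the fundamental contraction inequality, the preceding Proposition on Cauchy sequences, and the uniqueness Corollary.

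Given any $x \in X$, form the iteration sequence $\{T^n(x)\}$. By the Proposition this sequence is Cauchy, and by completeness of $(X, \m A, d)$ it converges to some $p \in X$, meaning $\|d(T^n(x), p)\| \to 0$ as $n \to \infty$. This is the obvious candidate for the fixed point.

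The crux is to verify that $T(p) = p$. I would first observe that taking the $C^*$-norm in the defining inequality for a $C^*$-algebra valued contraction gives $\|d(T(y), T(z))\| \leq \|A\|^2 \|d(y, z)\|$ for all $y, z \in X$, so $T$ is Lipschitz and in particular continuous. Applying this with $y = T^n(x)$ and $z = p$ yields $\|d(T^{n+1}(x), T(p))\| \to 0$. On the other hand $T^{n+1}(x) \to p$, since it is the tail of a sequence already known to converge to $p$. Uniqueness of limits then forces $T(p) = p$. Uniqueness of the fixed point among all of $X$ is immediate from the Corollary, which has already been derived from the fundamental contraction inequality.

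The only point requiring even minor care is the appeal to uniqueness of limits for the scalar quantities $\|d(\cdot, \cdot)\|$: symmetry, positive-definiteness, and the triangle inequality for $\|d(\cdot, \cdot)\|$ follow at once from the corresponding axioms of the $C^*$-algebra valued metric together with the fact that $\|\cdot\|$ is a norm on $\m A$, so two different limits would contradict positive-definiteness. I do not expect any step here to be a genuine obstacle; the whole argument is essentially a clean repackaging of the classical proof once the fundamental inequality is granted.
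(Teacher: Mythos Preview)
Your proposal is correct and follows essentially the same route as the paper: Picard iteration, the Proposition for the Cauchy property, completeness for the limit $p$, and the Corollary for uniqueness. The only difference is cosmetic: where the paper simply declares ``it is trivial that $p$ is a fixed point of $T$'' and then passes $m\to\infty$ in inequality~(\ref{eq2}) to record the error estimate, you spell out the fixed-point verification via continuity of $T$ and uniqueness of limits in the induced scalar metric $\|d(\cdot,\cdot)\|$---which is a perfectly standard and arguably cleaner way to fill in that step.
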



\begin{thebibliography}{10}
		
		\bibitem{B22}
		S. Banach, \emph{Sur les operations dans les ensembles abstraits et leur application aux equations integrales},
		Fund. Math. \textbf{3} (1922), 133--181.
		
		
		\bibitem{GD03}
		A. Granas and J. Dugundji, \emph{Fixed point theory},
		Springer, New York(2003).
		
		
		\bibitem{M90}
		Murphy Gerald, \emph{$C^*$-algebras and Operator Theory}, Academic Press, London(1990).
		
		\bibitem{R07}
		Richard S. Palais, \emph{A simple proof of the Banach contraction principle},
		Fixed point theory and appl. \textbf{2} (2007), 221--223.
		
		
		\bibitem{ZLH14}
		Zhenhua Ma, Lining Jiang, and Hongkai Sun, \emph{$C^*$-algebra-valued metric spaces and related fixed point theorems},
		Fixed point theory and appl. \textbf{2014:206} (2014), 1--11. 
		
		\bibitem{ZS16}
		Zoran Kadelburg and Stojan Radenovi, \emph{Fixed point results in $C^∗$-algebra-valued
			metric spaces are direct consequences of
			their standard metric counterparts},
		Fixed point theory and appl. \textbf{2016:53} (2016), 1--6.
		
		
		
		
	\end{thebibliography}
\end{document}